\documentclass[a4paper]{amsart}
\usepackage{url}
\usepackage{amsmath}
\usepackage[makeroom]{cancel}

\usepackage{graphicx}
\usepackage[all]{xy}
\usepackage[mathscr]{eucal}
\usepackage{amsmath,amssymb,amsfonts}

\newtheorem{theorem}{Theorem}[section]
\newtheorem{lemma}[theorem]{Lemma}
\newtheorem{corollary}[theorem]{Corollary}
\newtheorem{example}[theorem]{Example}

\newtheorem{proposition}[theorem]{Proposition}
\newtheorem{remark}[theorem]{Remark}

\usepackage{stackengine}
\usepackage{xcolor}
\usepackage[all]{xy}

\providecommand{\keywords}[1]{\textbf{Keywords---} #1}
\usepackage{orcidlink}

\title{Some \'etale groupoids as groupoids of fractions}

\author{Mark V. Lawson\,
\orcidlink{0000-0001-8868-6796}}
\subjclass[2020]{Primary 20L05; Secondary 22A22} 

\address{Mark V. Lawson, Department of Mathematics,
Maxwell Institute for Mathematical Sciences,
Heriot-Watt University,
Riccarton,
Edinburgh EH14 4AS,
UNITED KINGDOM}
\email{m.v.lawson@hw.ac.uk}

\begin{document}

\begin{abstract}
Given a monoid that can be embedded in a group and that acts partially on a set, we show that
under certain conditions the category associated with the partial action has a groupoid of fractions.
By imposing some topological conditions on the action,
we show that the groupoid is actually \'etale.
This \'etale groupoid arises naturally in $C^{\ast}$-algebra theory.
\end{abstract}

\keywords{Groupoid of fractions, \'etale groupoid}

\maketitle

\section{background}

This paper arose from a simple question: 
why do the Cuntz and Cuntz-Krieger groupoids, described in, for example,  \cite[pp~153--157]{P}, have the form that they do?
There are two answers to this question.
One is that it is what it is.
The other seeks a deeper reason; this is the goal of this paper.
Part of the answer can be found in \cite{RW} in their notion of `partial monoid action'
(which we term `strongly partial action' in this paper).
But their groupoid then arises as a {\em deus ex machina}.
In this paper, we shall show that the groupoid arises naturally as a groupoid of fractions.
In other words, the groupoid has structure.
The ideas which underpin this paper can be found in \cite[pp~33,~34]{LV}.

All categories will be small and treated as algebraic generalizations of monoids;
thus for us a category is like a monoid with many identities.
If $a$ is an element of a category then $\mathbf{d}(a)$,
the {\em domain} of $a$, will be the unique identity such that $\mathbf{d}(a)a$ is defined
and $\mathbf{r}(a)$, the {\em range} of $a$, will be the unique identity such that $a\mathbf{r}(a)$ is defined.
We shall compose elements in categories from left to right: thus $\exists ab$ precisely when
$\mathbf{r}(a) = \mathbf{d}(b)$. 
The partial multiplication in a category is denoted by $\mathbf{m}$. 
We shall identify objects with identities.
If $C$ is a category then its sets of identities will be denoted by $C_{o}$.
If the category $C$ is a subcategory of the category $D$ so that $C_{o} = D_{o}$,
then we say that $C$ is a {\em wide} subcategory of $D$.
A {\em topological category} $C$ is one equipped with a topology so that the maps $\mathbf{d}$, $\mathbf{r}$ and $\mathbf{m}$
are continuous.
A {\em cancellative category} is one in which if $ab = ac$ is defined then $b = c$,
and if $ba = ca$ is defined then $b = c$.
A {\em groupoid} is a category such that for every element $g$ there is a (perforce) unique element $g^{-1}$
such that $gg^{-1} = \mathbf{d}(g)$ and $g^{-1}g = \mathbf{r}(g)$. 
If $X$ is any non-empty set and $G$ is any group, then we can turn $X \times G \times X$ into a groupoid
when we make the following definitions: $\mathbf{d}(x,g,y) = (x,1,x)$, $\mathbf{r}(x,g,y) = (y,1,y)$,
$(x,g,y)^{-1} = (y,g^{-1}, x)$ and $(x,g,y)(y,h,z) = (x,gh,z)$. 
This paper is in two parts: an algebraic (in Section 2) and a topological (in Section 3).

The ideas of this paper were applied to semigroup theory in \cite{Lawson2026}.

\section{Algebraic constructions}

We take our initial cue from \cite[Section 5]{RW}, although our notion of `partial monoid action' is weaker than theirs.

Let $M$ be a monoid with identity 1, 
let $X$ be a set,
let $X \ast M \subseteq X \times M$, 
and let $f \colon X \ast M \rightarrow X$ be a function.
We write $f(x,m) = x \cdot m$, for the sake of notation,
and if $(x,m) \in X \ast M$ we write $\exists x \cdot m$.
We now consider the set of all triples $(x,m, x \cdot m) \in X \times M \times X$
such that $\exists x \cdot m$.
We denote the set of all such triples by $C(X,M)$.
We shall assume that 
$$\exists x \cdot 1$$ 
always and that 
$$x \cdot 1 = x.$$
This means that $(x,1,x) \in C(X,M)$ for all $x \in X$.
If $(x,m,y) \in C(X,M)$ define $\mathbf{d}(x,m,y) = (x,1,x)$ and $\mathbf{r}(x,m,y) = (y,1,y)$.
Following \cite{CG2012}, but weaker than \cite{RW}, we make the following definition.
We say that $M$ acts {\em partially} on $X$ (on the right) if 
$$\exists (x \cdot a) \cdot b \text{ implies that }
\exists x \cdot (ab)$$
in which case 
$$(x \cdot a) \cdot b = x \cdot (ab).$$

\begin{lemma}\label{lem:one} $C(X,M)$ is a category under the multiplication $(x,a,y)(y,b,z) = (x,ab,z)$ iff
$M$ acts partially on $X$.
\end{lemma}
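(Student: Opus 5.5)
The plan is to prove the two directions separately, and in each case to read ``category'' as the paper defines it. The multiplication $(x,a,y)(y,b,z)=(x,ab,z)$ must be a well-defined partial operation on $C(X,M)$, defined exactly when $\mathbf{r}(x,a,y)=\mathbf{d}(y,b,z)$. It must be associative, and the triples $(x,1,x)$ must act as identities compatible with $\mathbf{d}$ and $\mathbf{r}$.

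For the direction ``partial action implies category'', first take $(x,a,y),(y,b,z)\in C(X,M)$. Then $y=x\cdot a$ and $z=y\cdot b$, so $\exists (x\cdot a)\cdot b$. By the partial action axiom, $\exists x\cdot(ab)$ and $x\cdot(ab)=(x\cdot a)\cdot b=z$, so $(x,ab,z)\in C(X,M)$ and the product is well defined. The product has domain $(x,1,x)$ and range $(z,1,z)$, as required. Associativity is inherited from associativity in $M$, since both bracketings yield $(x,abc,w)$ and both are defined under the same condition on matching end-points. For the identities, $\exists x\cdot 1$ and $x\cdot 1=x$ give $(x,1,x)\in C(X,M)$. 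We also have $(x,1,x)(x,a,y)=(x,a,y)=(x,a,y)(y,1,y)$. Finally, the identities of the category are exactly the elements $(x,1,x)$, because an identity $e=(x,m,x)$ satisfies $e=\mathbf{d}(e)e\,\ldots$ and hence $e=(x,1,x)$ by comparing it with $\mathbf{d}(e)=\mathbf{d}(e)e=e$.

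For the converse, assume that the stated multiplication makes $C(X,M)$ a category, which means in particular that the product of composable elements lies in $C(X,M)$. Suppose $\exists (x\cdot a)\cdot b$ and put $y=x\cdot a$ and $z=y\cdot b$. Then $(x,a,y)$ and $(y,b,z)$ are in $C(X,M)$ and are composable. Their product $(x,ab,z)$ therefore lies in $C(X,M)$. By the definition of $C(X,M)$, this says $\exists x\cdot(ab)$ and $x\cdot(ab)=z=(x\cdot a)\cdot b$, which is exactly the partial action axiom.

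The only real subtlety is being precise about what ``is a category under this multiplication'' means. The key point is closure: the product must land back in $C(X,M)$, and this is precisely the content of the axiom in both directions. Everything else (associativity and identities) comes for free from $M$ being a monoid and from the standing assumption $x\cdot 1=x$. I expect no genuine obstacle beyond stating this clearly.
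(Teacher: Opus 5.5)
Your proof is correct and follows the paper's argument: in both directions the point is that closure of $C(X,M)$ under $(x,a,y)(y,b,z)=(x,ab,z)$ is exactly the partial action axiom, and you also spell out the direction the paper dismisses as straightforward. The sentence about identities is garbled. The clean version is $\mathbf{d}(e)=\mathbf{d}(e)e=e$, using first that $e$ is a right identity and then that $\mathbf{d}(e)$ is a left identity. That step is not needed anyway, since $\mathbf{d}$ and $\mathbf{r}$ are given explicitly.
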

\begin{proof} Suppose first that $C(X,M)$ is a category.
If $(x \cdot a) \cdot b$ is defined
then $(x,a, x \cdot a)$ and  $(x \cdot a, b, (x \cdot a) \cdot b)$ 
are both defined.
The product of these two elements is $(x,ab, (x \cdot a) \cdot b) \in C(X,M)$.
But this has to equal $(x,ab, x \cdot (ab))$.
Thus $x \cdot (ab)$ is defined and $x \cdot (ab) = (x \cdot a) \cdot b$.
We therefore have a partial action.
To prove the converse, we assume that we have a partial action (as defined)
and show that $C(X,M)$ is a category.
This is straightforward.
\end{proof}

We shall refer to the category $C(X,M)$ as the `obvious' category associated with the partial action $(X,M)$.
It is easy to characterize the categories $(C,M)$ but we shall not do so here.

If $m \in M$ we write $\mbox{\rm dom}(m)$ for the set of all $x \in X$ such that $\exists x \cdot m$.
We denote by $\mbox{\rm im}(m)$ the set of all elements of the form $x \cdot m$ for some $x \in X$.
It follows that $m$ induces a function $f_{m} \colon \mbox{\rm dom}(m) \rightarrow \mbox{\rm im}(m)$
given by $(x)f_{m} = x \cdot m$.

\begin{remark}
{\em  In this section, we shall assume that $M$ acts partially on $X$ so that $C(X,M)$ is a category.
We have apparently replaced one complicated thing --- a partial action --- by another complicated thing --- a category.
This paper argues that there is profit in doing this.}
\end{remark}

We shall now investigate conditions on our action so that the category $C(X,M)$ is `nice'.

\begin{lemma}\label{lem:two} Let $M$ act partially on the set $X$.
If $M$ is cancellative then the category $C(X,M)$ is cancellative.
\end{lemma}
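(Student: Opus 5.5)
The plan is to verify the two cancellation laws directly from the definition of the multiplication in $C(X,M)$ given in Lemma~\ref{lem:one}. Each law reduces to cancellation in $M$, together with the observation that a triple in $C(X,M)$ is determined by its first two coordinates, since the third is forced to be $x \cdot m$.

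For left cancellation, suppose $ab = ac$ is defined in $C(X,M)$. I would write $a = (x,m,y)$, $b = (y',n,z)$ and $c = (y'',p,w)$. Because $ab$ is defined we have $\mathbf{r}(a) = \mathbf{d}(b)$, so $y' = y$; likewise $ac$ defined gives $y'' = y$. The products are then $ab = (x,mn,z)$ and $ac = (x,mp,w)$. Equality of these triples gives $mn = mp$ in $M$ and $z = w$, and left cancellativity of $M$ yields $n = p$. Hence $b = (y,n,z) = (y,p,w) = c$. In fact $z = w$ also follows automatically, since $z = y \cdot n = y \cdot p = w$.

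Right cancellation is symmetric. Suppose $ba = ca$ with $a = (y,m,z)$, $b = (x,n,y)$ and $c = (x',p,y)$, where the middle coordinates match because the products are defined. Then $(x,nm,z) = (x',pm,z)$ gives $x = x'$ and $nm = pm$, so right cancellativity of $M$ gives $n = p$, and therefore $b = c$.

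I do not expect a genuine obstacle. The only point needing care is to read off correctly which coordinates are forced to agree: the composability conditions fix the shared middle object, and the product formula fixes the outer objects. Once that is done, the remaining coordinate is handled by cancellation in $M$.
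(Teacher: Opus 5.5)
Your proposal is correct and follows the same approach as the paper: composability fixes the shared middle object, equality of the products gives $mn = mp$ (respectively $nm = pm$) in $M$, and cancellation in $M$ then forces $b = c$. The only difference is that you write out the right-cancellation case explicitly, whereas the paper covers it with an appeal to symmetry.
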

\begin{proof} Assume that $M$ is cancellative.
Suppose that $(x,a,x \cdot a)(x \cdot a, b, x \cdot ab) = (x,a,x \cdot a)(x \cdot a, c, x \cdot ac)$.
Then we deduce that $ab = ac$ and so, by left cancellation, $b = c$.
The result now follows by symmetry.
\end{proof}

In what follows, we need a stronger assumption on our action than being partial in the above sense.
We say that a partial action is {\em strongly partial} if the following assumption holds:
$$\exists (x \cdot a) \cdot b \text{ iff } \exists x \cdot (ab);$$
in which case, $x \cdot (ab) = (x \cdot a) \cdot b$.
\footnote{Our strongly partial actions are simply the partial actions of \cite{RW}.}

We say that a category $C$ is {\em left reversible} if for all elements $a,b \in C$ such that $\mathbf{d}(a) = \mathbf{d}(b)$
we have that $aC \cap bC \neq \varnothing$;
we are using the same terminology as \cite{CP} since we regard small categories as generalizations of monoids.
The following result is simply a translation into strongly partial actions of the definition of left reversible.

\begin{lemma}\label{lem:ten} Let the monoid $M$ act strongly partially on the set $X$.
Then the category $C(X,M)$ is left reversible iff whenever $x \in \mbox{dom}(m) \cap \mbox{dom}(n)$
there exist $a,b \in M$ such that $ma = nb$ where $x \cdot m \in \mbox{dom}(a)$ and $x \cdot n \in \mbox{dom}(b)$.
\end{lemma}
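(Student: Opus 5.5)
The plan is to unwind the definition of left reversibility in the category $C(X,M)$ and translate each piece into the language of the partial action. The arrows of $C(X,M)$ are the triples $(x,m,x \cdot m)$ with $x \in \mbox{dom}(m)$. Two arrows $(x,m,x\cdot m)$ and $(x',n,x'\cdot n)$ have the same domain precisely when $x = x'$. So the hypothesis of left reversibility concerns exactly pairs $m,n$ with $x \in \mbox{dom}(m) \cap \mbox{dom}(n)$. The first step is to record this correspondence.

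Next I would describe $(x,m,x\cdot m)C \cap (x,n,x\cdot n)C$. By the multiplication of Lemma~\ref{lem:one}, an element of $(x,m,x\cdot m)C$ has the form $(x,m,x\cdot m)(x\cdot m,a,(x\cdot m)\cdot a) = (x,ma,(x\cdot m)\cdot a)$, where $x \cdot m \in \mbox{dom}(a)$. Similarly, an element of the second right ideal is $(x,nb,(x\cdot n)\cdot b)$ with $x\cdot n \in \mbox{dom}(b)$.

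For the forward direction, a common element means $ma = nb$ (second coordinates), with $x\cdot m \in \mbox{dom}(a)$ and $x \cdot n \in \mbox{dom}(b)$. That is exactly the stated condition. Only the partial action property is used here, because the composites already exist.

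For the converse, suppose $ma = nb$ with $x\cdot m \in \mbox{dom}(a)$ and $x\cdot n\in\mbox{dom}(b)$. Then $(x \cdot m)\cdot a$ and $(x\cdot n)\cdot b$ are defined. By the partial action axiom they equal $x\cdot(ma)$ and $x\cdot(nb)$ respectively. Since $ma = nb$, these are the same point of $X$. Hence the triples $(x,ma,x\cdot(ma))$ and $(x,nb,x\cdot(nb))$ coincide, so this element lies in both right ideals and the intersection is nonempty.

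The only delicate point is checking that the third coordinates agree, which I would verify explicitly as above. I note that strong partiality does not seem strictly needed for this equivalence. Plain partiality already gives existence of $x\cdot(ma)$ from existence of $(x\cdot m)\cdot a$, together with the required equality. The strong hypothesis would matter only if the condition were phrased as $x \in \mbox{dom}(ma)$ instead. So I do not expect any real obstacle: the proof is bookkeeping of the definitions.
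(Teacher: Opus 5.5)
Your proof is correct and follows the paper's argument: in both directions you identify elements of $(x,m,x\cdot m)C(X,M)$ with triples $(x,ma,(x\cdot m)\cdot a)$ where $x\cdot m\in\mbox{dom}(a)$, and then compare second coordinates. Your explicit check that the third coordinates agree, $(x\cdot m)\cdot a = x\cdot(ma) = x\cdot(nb) = (x\cdot n)\cdot b$, fills in a step the paper leaves implicit, and you are right that only the ordinary partial-action axiom is used here, not strong partiality.
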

\begin{proof} Suppose first that the following condition holds:
whenever $x \in \mbox{dom}(m) \cap \mbox{dom}(n)$
there exist $a,b \in M$ such that $ma = nb$ where $x \cdot m \in \mbox{dom}(a)$ and $x \cdot n \in \mbox{dom}(b)$.
We prove that $C(X,M)$ is left reversible.
Consider the following elements 
$(x,m,x \cdot m)$
and
$(x,n,x \cdot n)$
which are arbitrary apart from having the same domain.
Observe that $x \in \mbox{dom}(m) \cap \mbox{dom}(n)$.
Thus there are elements $m$ and $n$ such that 
$ma = nb$ where $x \cdot m \in \mbox{dom}(a)$ and $x \cdot n \in \mbox{dom}(b)$.
We may therefore define 
$(x \cdot m, a, (x \cdot m) \cdot a)$ and $(x \cdot n,b,(x \cdot n) \cdot b)$,
both elements of $C(X,M)$, 
satisfying
$$(x,m, x \cdot m)(x \cdot m, a, (x \cdot m) \cdot a) = (x,n,x\cdot n)(x \cdot n,b,(x \cdot n) \cdot b).$$
This proves that $C(X,M)$ is right reversible.
To prove the converse, suppose that $C(X,M)$ is right reversible.
Let $x \in \mbox{dom}(m) \cap \mbox{dom}(n)$.
Then $(x,m,x \cdot m)$ and $(x,n,x \cdot n)$ are well-defined elements of $C(X,M)$ with the same domain.
By assumption, 
$$(x,m,x \cdot m)C(X,M) \cap (x,n,x \cdot n)C(X,M) \neq \varnothing.$$
Thus 
$$(x,m, x \cdot m)(x \cdot m, a, (x \cdot m) \cdot a) = (x,n,x\cdot n)(x \cdot n,b,(x \cdot n) \cdot b)$$
for some elements $(x \cdot m, a, (x \cdot m) \cdot a)$ and $(x \cdot n,b,(x \cdot n) \cdot b)$ of $C(X,M)$.  
It follows that $ma = nb$ and $\exists (x \cdot m) \cdot a$ and $\exists (x \cdot n) \cdot b$.
\end{proof}

We shall now define a class of strongly partial actions of a monoid $M$ on a set $X$ (following \cite{RW}):
we shall say that such an action is {\em directed} if, whenever $\mbox{\rm dom}(m) \cap \mbox{\rm dom}(n) \neq \varnothing$,
there are elements $a,b \in M$ such that $ma = nb = r$  (say) and $\mbox{\rm dom}(m) \cap \mbox{\rm dom}(n) = \mbox{\rm dom}(r)$.
It is easy to check that directed strongly partial actions satisfy the condition of Lemma~\ref{lem:ten}.
The proof of the following is now immediate.

\begin{proposition}\label{prop:three} Let there be a strongly partial action of the monoid $M$ on the set $X$.
If this action is directed then the category $C(X,M)$ is left reversible
\end{proposition}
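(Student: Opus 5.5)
The plan is to verify the criterion of Lemma~\ref{lem:ten} and then invoke that lemma. So suppose $x \in \mbox{dom}(m) \cap \mbox{dom}(n)$. Then $\mbox{dom}(m) \cap \mbox{dom}(n) \neq \varnothing$, and directedness supplies $a,b \in M$ with
$$ma = nb = r \quad\text{and}\quad \mbox{dom}(m) \cap \mbox{dom}(n) = \mbox{dom}(r).$$
It remains to show that $x \cdot m \in \mbox{dom}(a)$ and $x \cdot n \in \mbox{dom}(b)$.

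This is the only step with content, and it is where the strong form of the partial action is used. Since $x \in \mbox{dom}(r) = \mbox{dom}(ma)$, we have $\exists x \cdot (ma)$. The strongly partial hypothesis, read in the direction ``$\exists x\cdot(ab)$ implies $\exists (x \cdot a)\cdot b$'', then gives $\exists (x \cdot m) \cdot a$, that is, $x \cdot m \in \mbox{dom}(a)$. Running the same argument with $r = nb$ gives $x \cdot n \in \mbox{dom}(b)$. A mere partial action would not be enough here, since it only provides the opposite implication.

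With the criterion verified, Lemma~\ref{lem:ten} shows that $C(X,M)$ is left reversible. Concretely, this gives
$$(x,m,x\cdot m)(x\cdot m,a,x\cdot r) = (x,n,x\cdot n)(x\cdot n,b,x\cdot r),$$
which exhibits a common element of $(x,m,x\cdot m)C(X,M)$ and $(x,n,x\cdot n)C(X,M)$. I do not expect any real obstacle: the only care needed is to use the ``only if'' half of the strongly partial condition, together with the fact that directedness places $x$ in $\mbox{dom}(r)$, rather than merely asserting that $\mbox{dom}(r)$ is nonempty.
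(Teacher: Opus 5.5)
Your proposal is correct and follows the paper's route: the paper simply asserts that directed strongly partial actions satisfy the criterion of Lemma~\ref{lem:ten} and concludes from it. You have also written out the check the paper omits, namely that $x \in \mbox{dom}(ma)$ forces $x\cdot m \in \mbox{dom}(a)$ by the ``only if'' half of the strongly partial condition.
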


\begin{remark}\label{rem:trump} {\em If the action is globally defined then $M$ left reversible implies that $C(X,M)$ is left reversible.
This accords with \cite[Example 5.3]{RW}.}
\end{remark}

In what follows, we shall assume that $M$ is a cancellative monoid
and so by Lemma~\ref{lem:two} the category $C(X,M)$ is cancellative.
We shall also assume that 
we have a directed strongly partial action.
Under these assumptions and by Proposition~\ref{prop:three}, we know that
$C(X,M)$ is a left reversible cancellative category.
However, any left reversible cancellative category can be embedded in a groupoid in a particularly nice way.
The following is well-known, being part of \cite{GZ}, but we provide a proof anyway because of its importance.

\begin{theorem}\label{them:four}
Let $C$ be a left reversible cancellative category.
Then there is a groupoid $G$ such that
$C$ is embedded in $G$ by a functor $\iota$ such that
$\iota (C)$ is a wide subcategory of $G$
and 
$\iota (C)\iota (C)^{-1} = G$.
The groupoid $G$ is unique with these properties.
\end{theorem}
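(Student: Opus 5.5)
We would prove this by the classical Ore-style construction of a groupoid of fractions, written to match the paper's left-to-right composition. Since composition is left to right and left reversibility says $aC \cap bC \neq \varnothing$ whenever $\mathbf{d}(a)=\mathbf{d}(b)$, the fractions should have the form $ab^{-1}$ with $\mathbf{r}(a)=\mathbf{r}(b)$. Accordingly, we take the set $U$ of pairs $(a,b) \in C \times C$ with $\mathbf{r}(a)=\mathbf{r}(b)$; such a pair is to be thought of as $ab^{-1}$, running from $\mathbf{d}(a)$ to $\mathbf{d}(b)$. We define $(a,b) \sim (c,d)$ iff there exist $u,v \in C$ with $au = cv$ and $bu = dv$.

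The first step is to show that $\sim$ is an equivalence relation. Reflexivity and symmetry are immediate. For transitivity, suppose $(a,b)\sim(c,d)$ via $u,v$ and $(c,d)\sim(e,f)$ via $u',v'$. Then $v$ and $u'$ have the same domain $\mathbf{r}(c)$, so left reversibility gives $s,t$ with $vs = u't$. Hence $aus = cvs = cu't = ev't$, and similarly $bus = dvs = du't = fv't$.

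The second step is the multiplication. To compose $[a,b][c,d]$ when $\mathbf{d}(b)=\mathbf{d}(c)$, we use left reversibility to choose $p,q$ with $bp = cq$, and we set the product to be $[ap, dq]$; informally this is $ab^{-1}cd^{-1} = ap(cq)^{-1}\cdots$. The identities are the classes $[e,e]$ with $e\in C_{o}$, and the inverse is $[a,b]^{-1}=[b,a]$. The main obstacle is well-definedness, that is, independence of the choice of $p,q$ and of the representatives. For the choice of $p,q$: if $bp=cq$ and $bp'=cq'$, then left reversibility applied to $p$ and $p'$ gives $s,s'$ with $ps=p'$ and $s'$ satisfying $ps = p's'$. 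Then $bps = bp's'$, so $cqs = cq's'$, and left cancellation gives $qs = q's'$. This witnesses $(ap,dq)\sim(ap',dq')$. Changing representatives is handled by the same kind of argument: each equivalence is a common right extension, and we absorb it by a further application of reversibility followed by cancellation. Associativity then follows by choosing compatible common extensions. Finally we check that $[a,b][b,a]=[a,a]=[\mathbf{d}(a),\mathbf{d}(a)]$, using $(a,a)\sim(\mathbf{d}(a),\mathbf{d}(a))$ via $u=\mathbf{r}(a)$, $v=a$. So $G=U/\!\sim$ is a groupoid.

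The third step is the embedding. We define $\iota(a) = [a,\mathbf{r}(a)]$. This is a functor: $[a,\mathbf{r}(a)][b,\mathbf{r}(b)]$ with $p=b$, $q=\mathbf{r}(b)$ gives $[ab,\mathbf{r}(b)]$. It is injective: if $[a,\mathbf{r}(a)]=[c,\mathbf{r}(c)]$ then $\mathbf{r}(a)u=\mathbf{r}(c)v$ gives $u=v$, so $au=cu$, and right cancellation yields $a=c$. It is wide because the identities of $G$ are exactly $\iota(e)$ for $e\in C_{o}$. Moreover $[a,b]=\iota(a)\iota(b)^{-1}$, so $\iota(C)\iota(C)^{-1}=G$.

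For uniqueness, let $G'$ be another such groupoid with embedding $\iota'$. We define $\theta(\iota(a)\iota(b)^{-1}) = \iota'(a)\iota'(b)^{-1}$. This is well defined: in $G$, $\iota(a)\iota(b)^{-1}=\iota(c)\iota(d)^{-1}$ holds iff $(a,b)\sim(c,d)$. For the forward direction, reversibility applied to $b$ and $d$ and cancellation inside $G$ produce $u,v$. The same criterion holds in $G'$ by exactly the same argument, since it uses only that $G'$ is a groupoid containing $C$ with $C$ left reversible and cancellative. So $\theta$ is a well-defined bijection. It is a functor because products in $G'$ are computed by the same formula $b^{-1}c = pq^{-1}$ whenever $bp=cq$. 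Hence $\theta$ is an isomorphism extending the identity on $C$.
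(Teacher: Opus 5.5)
Your proposal is correct and follows the paper's proof essentially step for step. You use the same pairs $(a,b)$ with $\mathbf{r}(a)=\mathbf{r}(b)$ modulo common right extensions, the same product $[a,b][c,d]=[ap,dq]$ with $bp=cq$, the same embedding $a\mapsto[a,\mathbf{r}(a)]$, and the same uniqueness map $\iota(a)\iota(b)^{-1}\mapsto\iota'(a)\iota'(b)^{-1}$; you also write out more of the routine checks, such as transitivity and independence of the choice of $p,q$, than the paper does. The only blemishes are typographical: in the independence-of-choice step the clause ``$ps=p'$'' should be deleted, since left reversibility gives $ps=p's'$ and that is all you use, and your heuristic for the product should read $ab^{-1}cd^{-1}=ap(dq)^{-1}$.
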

\begin{proof} We shall build $G$ from $C$.
Consider the ordered pair $(x_{1},y_{1})$ where $\mathbf{r}(x_{1}) = \mathbf{r}(y_{1})$.
We would like to regard this as the element $x_{1}y_{1}^{-1}$ in the groupoid $G$.
If $(x_{2},y_{2})$ is such that $\mathbf{d}(x_{1}) = \mathbf{d}(x_{2})$ and $\mathbf{d}(y_{1}) = \mathbf{d}(y_{2})$.
Define $(x_{1},y_{1}) \sim (x_{2},y_{2})$ iff there exist $a,b \in C$ such that $(x_{1},y_{1})a = (x_{2},y_{2})b$.
using the fact that $C$ is left reversible, it can be shown that $\sim$ is an equivalence relation.
We denote the $\sim$-equivalence class containing $(x,y)$ by $[x,y]$.
Define $\mathbf{r}[x,y] = [\mathbf{d}(y), \mathbf{d}(y)]$ and $\mathbf{d}[x,y] = [\mathbf{d}(x), \mathbf{d}(x)]$.
Define $[x,y]^{-1} = [y,x]$.
To define the product $[x,y][u,v]$ we require that $\mathbf{d}(y) = \mathbf{d}(u)$.
By left reversibility, we have that there exist elements $a,b \in C$ such that $ya = ub$.
Define $[x,y][u,v] = [xa,vb]$.
Judicious use of left reversibility shows that this is well-defined.
Put $G$ equal to the set of $\sim$-equivalence classes, together with the above operations.
It is now easy to show that $G$ is a groupoid.
Define $\iota \colon C \rightarrow G$
by $\iota (x) = [x,\mathbf{r}(x)]$.
It is routine to check that $\iota$ is a functor.
It is an embedding because $C$ is cancellative.
The element $[x,y] = [x,\mathbf{r}(x)][y,\mathbf{r}(y)]^{-1}$ and so 
$\iota (C)\iota (C)^{-1} = G$.
The identities of $G$ are the elements of the form $[e,e]$ where $e$ is an identity of $C$.
Thus $\iota (C)$ is a wide subcategory of $G$.

Now, let $H$ be a groupoid equipped with an embedding $\theta \colon C \rightarrow H$ 
such that the properties stated in the theorem all hold.
We shall define an isomorphism $\phi \colon G \rightarrow H$.
Let $g \in G$.
By assumption, we can write $g = \iota (x)\iota (y)^{-1}$ where $x,y \in C$.
But, if $g = \iota (x') \iota (y')^{-1}$,
then $(x,y) \sim (x',y')$.
It follows that $(x,y) \sim (x',y')$.
We may therefore define $\phi (g) = \theta (x)\theta (y)^{-1}$.
We show that $\phi$ is injective.
Suppose that $g = \iota (x)\iota (y)^{-1}$ and $h = \iota (u)\iota (v)^{-1}$ and that $\phi (g) = \phi (h)$.
You can quickly deduce that $\mathbf{r}(x) = \mathbf{r}(y)$, that $\mathbf{r}(u) = \mathbf{r}(v)$
and that $\mathbf{d}(x) = \mathbf{d}(u)$ and that $\mathbf{d}(y) = \mathbf{d}(v)$.
Let $a,b \in C$ be such that $xa = ub$.
We prove that $ya = vb$ from which injectivity follows.
However, it is easy to show that $\theta (ya) = \theta (vb)$.
But $\theta$ is injective.
Surjectivity follows from the fact that $H = \theta (C)\theta (C)^{-1}$.
It is now routine to check that $\phi$ is a functor. 
\end{proof}

The groupoid $G$ is called a {\em groupoid of fractions} of $C$.

We now return to our category $C(X,M)$.
To simplify calculations, we shall assume that $M$ can be embedded in a group $N$.
We describe the groupoid of fractions of $C(X,M)$, denoted by $G(X,M)$, in another way.
We describe another groupoid, denoted by $\mathscr{G}$, which was defined in \cite[Lemma 5.5]{RW}.
It consists of triples $(x,q,y) \in X \times N \times X$ such that $q = mn^{-1}$ where $m,n \in M$
and $x \cdot m = y \cdot n$.
You should regard $\mathscr{G}$ as a subgroupoid of $X \times N \times X$.
We have the following result which connects our approach with the one in \cite{RW}.

\begin{theorem}\label{them:five} Let $M$ have a directed strongly partial action on the set $X$,
and suppose that $M$ can be embedded in a group $N$.
Then the category $C(X,M)$ has a groupoid of fractions $G(X,M)$ which is
isomorphic with the groupoid $\mathscr{G}$ defined above. 
\end{theorem}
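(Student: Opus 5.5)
The strategy is to use the uniqueness part of Theorem~\ref{them:four}. By Lemma~\ref{lem:two} and Proposition~\ref{prop:three}, $C(X,M)$ is a left reversible cancellative category, since $M$ embeds in a group and is therefore cancellative. So Theorem~\ref{them:four} gives a groupoid of fractions $G(X,M)$. It then suffices to show three things about $\mathscr{G}$: it is a groupoid, there is an embedding $\theta \colon C(X,M) \rightarrow \mathscr{G}$ whose image is wide, and $\theta(C(X,M))\theta(C(X,M))^{-1} = \mathscr{G}$. Uniqueness in Theorem~\ref{them:four} then yields $G(X,M) \cong \mathscr{G}$.

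\emph{Step 1: $\mathscr{G}$ is a subgroupoid of $X \times N \times X$.} Identities $(x,1,x)$ lie in $\mathscr{G}$ with $m=n=1$. Inverses are clear: if $q = mn^{-1}$ with $x \cdot m = y\cdot n$, then $q^{-1} = nm^{-1}$ with $y \cdot n = x \cdot m$.

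Closure under products is the step I expect to be the main obstacle. Take $(x,mn^{-1},y)$ and $(y,uv^{-1},z)$, with $x\cdot m = y\cdot n$ and $y\cdot u = z\cdot v$. Then $y \in \mathrm{dom}(n)\cap\mathrm{dom}(u)$, so directedness gives $a,b \in M$ with $na = ub = r$ and $\mathrm{dom}(n)\cap\mathrm{dom}(u) = \mathrm{dom}(r)$. Hence $y\cdot r$ exists. Strong partiality gives $(y\cdot n)\cdot a = y \cdot r = (y\cdot u)\cdot b$. Therefore $(x\cdot m)\cdot a$ exists, so $x\cdot(ma)$ exists and equals $y\cdot r$; similarly $z\cdot(vb) = y\cdot r$. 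In $N$ we have $mn^{-1}uv^{-1} = ma(na)^{-1}(ub)b^{-1}v^{-1} = (ma)(vb)^{-1}$, since $na = ub$. Thus the product lies in $\mathscr{G}$. Strong partiality (the ``iff'') is exactly what lets us pass from $(x\cdot m)\cdot a$ to $x\cdot (ma)$. It is also what we need to know that $y\cdot r$ exists and splits as $(y\cdot n)\cdot a$, so directedness is essential here.

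\emph{Step 2: the embedding.} Define $\theta(x,m,x\cdot m) = (x,m,x\cdot m)$; this lies in $\mathscr{G}$ via $q = m\cdot 1^{-1}$. It is clearly an injective functor, because the multiplications agree: $(x,a,y)(y,b,z) = (x,ab,z)$ in both. It is wide because every identity $(x,1,x)$ is in the image.

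\emph{Step 3: the image generates.} Given $(x,mn^{-1},y)\in\mathscr{G}$ with $x\cdot m = y\cdot n = w$, we have
$(x,mn^{-1},y) = (x,m,w)(y,n,w)^{-1} = \theta(x,m,w)\,\theta(y,n,w)^{-1}$, and both factors lie in $\theta(C(X,M))$. Conversely, every product $\theta(c)\theta(d)^{-1}$ lies in $\mathscr{G}$ by Step 1. Hence $\theta(C)\theta(C)^{-1} = \mathscr{G}$, and Theorem~\ref{them:four} completes the proof.

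Note that the well-definedness of $\mathscr{G}$ as a set causes no issue, since it is defined as a subset of $X\times N\times X$ by an existential condition.
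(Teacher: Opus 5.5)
Your proof is correct and follows the paper's route: invoke the uniqueness clause of Theorem~\ref{them:four}, regard $C(X,M)$ as a wide subcategory of $\mathscr{G}$ via $m = m1^{-1}$, and factor $(x,mn^{-1},y) = (x,m,x\cdot m)(y,n,x\cdot m)^{-1}$. Your Step~1 usefully supplies the check, which the paper leaves implicit, that $\mathscr{G}$ is closed under products, so that uniqueness is genuinely applied to a groupoid. One small correction: passing from $(x\cdot m)\cdot a$ to $x\cdot(ma)$ uses only the ordinary partial-action axiom, whereas the converse direction of strong partiality is what you need to split $y\cdot r$ as $(y\cdot n)\cdot a = (y\cdot u)\cdot b$.
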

\begin{proof} We could define an explicit isomorphism, but we instead will use the uniqueness guaranteed by
Theorem~\ref{them:four}.
Observe that we can regard $C(X,N)$ as a subcategory of $\mathscr{G}$
since $m = m1^{-1}$ for each element of $m$.
The identities of $\mathscr{G}$ have the form $(x,1,x)$.
It follows that $C(X,M)$ is a wide subcategory of $\mathscr{G}$.
Finally, if $(x,q,y)$ is an element of $\mathscr{G}$ where $q = mn^{-1}$ and $x \cdot m = y \cdot n$
then 
$$(x,m,x \cdot m)(y,n,x \cdot m)^{-1},$$
where $(x,m,x \cdot m),(y,n,x \cdot m) \in \mathscr{G}$.
\end{proof}

\begin{example}\label{ex:mark}
{\em We now return to the groupoid that motivated this paper.
Let $G$ be a directed graph.
We denote the right-infinite paths in $G$ by $G^{\omega}$ and the finite paths in $G$ by $G^{\ast}$.
If $x \in G^{\omega}$ then $x = x_{0}x_{1}x_{2} \ldots$
where the $x_{i}$ are edges of $G$.
The monoid $\mathbb{N}$ acts on the right on the set $G^{\omega}$
by $x \cdot n = x_{n}x_{n+1}x_{n+2} \ldots$.
Thus, the effect of $n$ is to chop off the finite string at the front of $x$ of length $n$;
this is nothing more than the iterated version of the shift map.
There is an `obvious' category associated with this action: $C(G^{\omega}, \mathbb{N})$.
The monoid $\mathbb{N}$ can be embedded in the group $\mathbb{Z}$.
Because the action is global, and $\mathbb{N}$ is left reversible, it follows that $C(G^{\omega}, \mathbb{N})$ is left reversible by Remark~\ref{rem:trump} 
(although this can easily be seen directly).
We use Theorem~\ref{them:five} to describe the groupoid of fractions of the category $C(G^{\omega}, \mathbb{N})$.
It consists of triples $(x,q,y)$, where $x,y \in G^{\omega}$, such that if $q = m - n$, where $m,n \in \mathbb{N}$, then $x \cdot m = y \cdot n$.
Thus there is a finite path $u$ of length $m$ and a finite path $v$ of length $n$ and  an infinite path $w$  such that
$x = uw$ and $y = vw$.
Thus the elements of $\mathcal{G}$ can be written $(uw, |u| - |v| , vw) \in G^{\omega} \times \mathbb{Z} \times G^{\omega}$
where $u,v \in G^{\ast}$ and $w \in G^{\omega}$.
The groupoid operations are those inherited from the groupoid $G^{\omega} \times \mathbb{Z} \times G^{\omega}$.
}
\end{example}

We may summarize what we have described in this section, as follows.
Let the monoid $M$ act partially on the set $X$.
Then, by Lemma~\ref{lem:one}, there is an associated `obvious' category $C(X,M)$.
If the monoid $M$ is actually cancellative,
then by Lemma~\ref{lem:two} $C(X,M)$ is cancellative.
By Proposition~\ref{prop:three},
if the partial action of $M$ on $X$ is strong and directed
then $C(X,M)$ is left reversible.
Therefore, if $M$ is a cancellative monoid with a directed strongly partial action on the set $X$,
then by Theorem~\ref{them:four}, the category $C(X,M)$ is a left reversible cancellative category and so has a groupoid of fractions.
If, in addition, the cancellative monoid $M$ can be embedded in a group
then, by Theorem~\ref{them:five}, we obtain a much simpler description of this groupoid of fractions.
It is this final version of the groupoid of fractions which makes contact with \cite[Lemma 5.5]{RW}.

This completes the purely algebraic part of this paper.
We now turn to the topological part.

\section{Topological constructions}

We now want to endow our objects with a topology.

A topological category is said to be {\em $\mathbf{d}$-\'etale} if $\mathbf{d}$ is a local homeomorphism;\footnote{Cockett and Garner \cite{CG} use the term `source \'etale'.
The term we use is more natural in our context.}
{\em $\mathbf{r}$-\'etale} topological categories are defined in a similar way.
A topological category which is both $\mathbf{d}$-\'etale and $\mathbf{r}$-\'etale is said to be {\em \'etale}.
Such categories were defined in \cite{KL}.
The fact that the definition in \cite{KL} agrees with the one given above was proved in \cite[Proposition 2.4]{Bice}.

We shall be interested in \'etale left reversible cancellative topological categories;
the basic question is whether the topology is inherited by the groupoid of fractions (and how).

A subset $A \subseteq C$ of a category $C$ will be called {\em $\mathbf{d}$-injective} if $a,b \in A$ and $\mathbf{d}(a) = \mathbf{d}(b)$ implies that $a = b$.
We may similarly define {\em $\mathbf{r}$-injective} subsets.
A subset which is both $\mathbf{d}$-injective and $\mathbf{r}$-injective is called a {\em local bisection}.
In a $\mathbf{d}$-\'etale category if $A$ is open then $\mathbf{d}(A)$ is open.
In such a category, we denote by $W_{x}$ an open subset that contains $x$ such that $\mathbf{d}$
restricted to $W_{x}$
induces a homeomorphism from $W_{x}$ onto $\mathbf{d}(W_{x})$.
Observe that $W_{x}$ is then a $\mathbf{d}$-injective subset.

\begin{lemma}\label{lem:base-base} Let $C$ be a topological category.
If $C$ is $\mathbf{d}$-\'etale then the open $\mathbf{d}$-injective sets form a base for the topology.
\end{lemma}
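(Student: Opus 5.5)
To prove Lemma~\ref{lem:base-base} I would use the standard criterion for a base: a family $\mathcal{B}$ of open sets is a base for the topology exactly when, for every open set $U \subseteq C$ and every $a \in U$, some $B \in \mathcal{B}$ satisfies $a \in B \subseteq U$. I take $\mathcal{B}$ to be the family of open $\mathbf{d}$-injective subsets of $C$. Its members are open by definition, so only this criterion needs checking.

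Fix an open set $U$ and an element $a \in U$. Since $C$ is $\mathbf{d}$-\'etale, $\mathbf{d}$ is a local homeomorphism, so there is an open set $W_{a}$ containing $a$ on which $\mathbf{d}$ restricts to a homeomorphism onto $\mathbf{d}(W_{a})$. As noted just before the statement, $W_{a}$ is then $\mathbf{d}$-injective. Now put $B = U \cap W_{a}$. It is open as an intersection of two open sets. It contains $a$. It is contained in $U$. Finally, it is $\mathbf{d}$-injective, because any subset of a $\mathbf{d}$-injective set is $\mathbf{d}$-injective: if $b,c \in B$ and $\mathbf{d}(b) = \mathbf{d}(c)$, then $b,c \in W_{a}$, so $b = c$. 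Hence $B \in \mathcal{B}$ and $a \in B \subseteq U$, which proves the lemma.

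There is no real obstacle here. The only point needing attention is that being $\mathbf{d}$-injective is inherited by subsets, and this is what lets us shrink $W_{a}$ to fit inside an arbitrary open $U$. If one also wanted the restriction of $\mathbf{d}$ to $B$ to be a homeomorphism onto an open set, this also holds: $B$ is open in $W_{a}$, so $\mathbf{d}(B)$ is open in the open set $\mathbf{d}(W_{a})$, and hence open in $C_{o}$. The lemma as stated does not require this.
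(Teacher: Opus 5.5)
Your proposal is correct and follows essentially the same argument as the paper: intersect an arbitrary open set $U \ni a$ with the neighbourhood $W_{a}$ on which $\mathbf{d}$ is a homeomorphism, and note that $U \cap W_{a}$ is open, contains $a$, lies in $U$, and is $\mathbf{d}$-injective. Your explicit remark that $\mathbf{d}$-injectivity passes to subsets is the one step the paper leaves implicit.
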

\begin{proof}
Let $x \in U$, where $U$ is any (non-empty) open subset of $C$.
Since the category is $\mathbf{d}$-\'etale, there is an open $\mathbf{d}$-injective subset $W_{x}$ of $C$ containing $x$.
Thus $x \in U \cap W_{x} = A$ is an open $\mathbf{d}$-injective subset containing $x$ but contained in $U$.
It follows that every open set is a union of $\mathbf{d}$-injective open sets.
\end{proof}

The dual of the result above also holds.

The following result does not mention the topology.

\begin{lemma}\label{lem:local-sections} In any category, the product of $\mathbf{d}$-injective subsets is a $\mathbf{d}$-injective subset.
\end{lemma}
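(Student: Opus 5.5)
The plan is to argue directly from the definitions, working with the left-to-right composition convention fixed in Section 1. For subsets $A,B$ of a category $C$, I interpret their product as
$$AB = \{ab : a \in A,\ b \in B,\ \mathbf{r}(a) = \mathbf{d}(b)\}.$$
I will assume $A$ and $B$ are $\mathbf{d}$-injective and show that $AB$ is $\mathbf{d}$-injective. The only category fact needed is that when $ab$ is defined, $\mathbf{d}(ab) = \mathbf{d}(a)$ and $\mathbf{r}(ab) = \mathbf{r}(b)$; this follows from the definition of domain and range as the unique identities composable on the appropriate side.

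Step one. Take two elements of $AB$, written $ab$ and $a'b'$ with $a,a' \in A$, $b,b' \in B$, $\mathbf{r}(a)=\mathbf{d}(b)$ and $\mathbf{r}(a')=\mathbf{d}(b')$, and suppose $\mathbf{d}(ab)=\mathbf{d}(a'b')$. Then $\mathbf{d}(a)=\mathbf{d}(a')$, so $\mathbf{d}$-injectivity of $A$ gives $a=a'$.

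Step two. From $a = a'$ we get $\mathbf{d}(b)=\mathbf{r}(a)=\mathbf{r}(a')=\mathbf{d}(b')$. Now $\mathbf{d}$-injectivity of $B$ gives $b=b'$, and therefore $ab=a'b'$.

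There is no real obstacle. The one point needing care is that, under the paper's left-to-right composition, $\mathbf{d}$ of a product is determined by the left factor, so the argument must pass from $A$ to $B$ and not the other way round. By duality, the same argument run from the right shows that products of $\mathbf{r}$-injective subsets are $\mathbf{r}$-injective, and hence that products of local bisections are local bisections.
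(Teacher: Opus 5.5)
Your proof is correct and is essentially the paper's argument: $\mathbf{d}$ of a product equals $\mathbf{d}$ of one factor, so $\mathbf{d}$-injectivity pins down that factor, and the matching of $\mathbf{r}$ with $\mathbf{d}$ then pins down the other. Your order ($A$ first, then $B$) is in fact the one that fits the paper's stated left-to-right convention; the paper's written proof uses $\mathbf{d}(ab)=\mathbf{d}(b)$ and cancels in $B$ first (with the slip `$b=c$' for `$b=d$'), which is the mirror-image argument valid under right-to-left composition.
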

\begin{proof} Let $A$ and $B$ be $\mathbf{d}$-injective subsets of the category $C$.
We prove that $AB$ is $\mathbf{d}$-injective.
Let $ab,cd \in AB$, where $a,c \in A$ and $b,d \in B$, such that $\mathbf{d}(ab) = \mathbf{d}(cd)$.
Then, because we are working in a category, $\mathbf{d}(ab) = \mathbf{d}(b)$ and $\mathbf{d}(cd) = \mathbf{d}(d)$.
It follows that $b = c$ since $B$ is $\mathbf{d}$-injective.
It now follows that $\mathbf{d}(a) = \mathbf{d}(c)$ and so $a = c$ since $A$ is $\mathbf{d}$-injective.
We have therefore proved that $ab = cd$.
Thus $AB$ is $\mathbf{d}$-injective subset.
\end{proof}

The following was first proved in \cite{KL}
and is also a consequence of  \cite[Proposition 2.4]{Bice}.

\begin{proposition}\label{prop:open-multiplication}
In an \'etale topological category, the multiplication map is open.
\end{proposition}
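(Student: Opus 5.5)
To show that $\mathbf{m} \colon C \ast C \rightarrow C$ is open, where $C \ast C = \{(a,b) \in C \times C : \mathbf{r}(a) = \mathbf{d}(b)\}$ carries the subspace topology, it is enough to check that $\mathbf{m}$ sends basic open sets to open sets. By Lemma~\ref{lem:base-base} and its dual, the open local bisections form a base for the topology of $C$: intersect a $\mathbf{d}$-injective open neighbourhood with an $\mathbf{r}$-injective one. So every open subset of $C \ast C$ is a union of sets $(A \times B) \cap (C \ast C)$ with $A,B$ open local bisections. The image of such a set under $\mathbf{m}$ is exactly $AB$. The plan is therefore to prove that $AB$ is open whenever $A$ and $B$ are open local bisections.

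Fix $ab \in AB$ with $a \in A$ and $b \in B$. We aim to write $AB$ as the image under $\mathbf{r}$-local inverses of an open set of identities. Since $B$ is an open $\mathbf{d}$-injective set and $C$ is $\mathbf{d}$-\'etale, shrinking $B$ within the local base we may assume $\mathbf{d}|_{B}$ is a homeomorphism onto the open set $\mathbf{d}(B)$. Similarly, shrinking $A$, we may assume $\mathbf{r}|_{A}$ is a homeomorphism onto the open set $\mathbf{r}(A)$. Put
\[ U = \mathbf{r}(A) \cap \mathbf{d}(B), \]
which is an open set of identities. Define $\alpha = (\mathbf{r}|_{A})^{-1}$ and $\beta = (\mathbf{d}|_{B})^{-1}$ on $U$. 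Then
\[ AB = \{\alpha(e)\beta(e) : e \in U\}. \]

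The map $e \mapsto \alpha(e)\beta(e)$ is continuous, because $\alpha$, $\beta$ and $\mathbf{m}$ are continuous. The difficulty is to show that its image is open in $C$, and this is the main obstacle. First try using $\mathbf{d}$-\'etaleness. Since $\mathbf{d}(\alpha(e)\beta(e)) = \mathbf{d}(\alpha(e))$, the set $AB$ is $\mathbf{d}$-injective by Lemma~\ref{lem:local-sections}, and $\mathbf{d}(AB) = \mathbf{d}(\alpha(U))$ is open because $\alpha(U)$ is open in $A$. This alone does not make $AB$ open: $\mathbf{d}$-injective sets with open $\mathbf{d}$-image need not be open.

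To resolve this, choose an open neighbourhood $W$ of $ab$ on which $\mathbf{d}$ is a homeomorphism onto an open set. By continuity of $\mathbf{m}$ at $(a,b)$, there is an open $V \subseteq U$ containing $\mathbf{r}(a)$ such that $\alpha(e)\beta(e) \in W$ for all $e \in V$. The set $V' = \mathbf{d}(\alpha(V))$ is open, since $\alpha(V)$ is open in $A$ and $\mathbf{d}$ is an open map. On $V'$, the inverse $(\mathbf{d}|_{W})^{-1}$ agrees with the map $f \mapsto \alpha(e)\beta(e)$, where $e$ is the unique element of $V$ with $\mathbf{d}(\alpha(e)) = f$. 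This uses $\mathbf{d}$-injectivity of both $W$ and $AB$. Hence the image of $V$ equals $(\mathbf{d}|_{W})^{-1}(V' \cap \mathbf{d}(W))$, which is open in $W$ and so in $C$. This gives an open neighbourhood of $ab$ inside $AB$, so $AB$ is open, and therefore $\mathbf{m}$ is open.

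If any step fails, the fallback is to cite \cite{KL} or \cite[Proposition 2.4]{Bice}, as the statement itself indicates.
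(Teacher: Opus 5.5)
Your proof is correct, and it takes a different route from the paper. The paper gives no argument of its own and defers to \cite{KL} and \cite[Proposition 2.4]{Bice}.

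You reduce, via the base of open local bisections, to showing that $AB$ is open whenever $A,B$ are open local bisections. For each $ab\in AB$ you then exhibit the neighbourhood $W\cap\mathbf{d}^{-1}(\mathbf{d}(\alpha(V)))$.
\begin{enumerate}
\item It is open because $\alpha(V)=A\cap\mathbf{r}^{-1}(V)$ is open and $\mathbf{d}$ is continuous and open.
\item It lies in $AB$: take $w$ in it with $\mathbf{d}(w)=\mathbf{d}(\alpha(e))$ and $e\in V$. Then $\alpha(e)\beta(e)$ also lies in $W$ with the same domain, so $w=\alpha(e)\beta(e)$ by $\mathbf{d}$-injectivity of $W$.
\end{enumerate}

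What your route buys is a self-contained, elementary argument that shows where each hypothesis enters. $\mathbf{r}$-\'etaleness is used only for continuity of $\alpha=(\mathbf{r}|_{A})^{-1}$. $\mathbf{d}$-\'etaleness is used for continuity of $\beta$, for openness of $\mathbf{d}$, and for the neighbourhood $W$. Your argument also proves part (1) of Corollary~\ref{cor:mary} directly, which is the form in which the proposition is actually used later. The citation, by contrast, buys the full equivalence of the two definitions of an \'etale category.

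Two minor remarks. No shrinking is needed: an open $\mathbf{d}$-injective set in a $\mathbf{d}$-\'etale category is automatically carried homeomorphically onto an open set, since $\mathbf{d}$ restricted to it is a continuous open bijection. Also, the uniqueness of $e$ comes from $\mathbf{d}$-injectivity of $A$, not of $AB$. The fallback to the literature is unnecessary.
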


The following corollary summarizes what we need below
and uses 
Lemma~\ref{lem:base-base} (and its dual),
Lemma~\ref{lem:local-sections},
and 
Proposition~\ref{prop:open-multiplication}
in the proof of (1) and (2), whereas the proof of (3) is straightforward.

\begin{corollary}\label{cor:mary} In an \'etale topological category,
the following hold:
\begin{enumerate}
\item The product of open local bisections is an open local bisection.
\item The set of all open local bisections forms a base for the topology on $C$.
\item The set of identities is open.
\end{enumerate}
\end{corollary}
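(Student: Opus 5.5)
For (1), let $A$ and $B$ be open local bisections of the \'etale category $C$. Lemma~\ref{lem:local-sections} shows that $AB$ is $\mathbf{d}$-injective. The dual argument, with the roles of $\mathbf{d}$ and $\mathbf{r}$ swapped, shows that $AB$ is $\mathbf{r}$-injective: if $\mathbf{r}(ab) = \mathbf{r}(cd)$ then $\mathbf{r}(a) = \mathbf{r}(c)$, so $a = c$, which forces $\mathbf{r}(b) = \mathbf{d}(b) \cdot$ wait, more precisely $\mathbf{d}(b) = \mathbf{r}(a) = \mathbf{r}(c) = \mathbf{d}(d)$ in our left-to-right convention; together with $\mathbf{r}(b) = \mathbf{r}(d)$ this gives $b = d$. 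Hence $AB$ is a local bisection. For openness, write $AB = \mathbf{m}\bigl((A \times B) \cap C^{(2)}\bigr)$, where $C^{(2)}$ is the space of composable pairs with the subspace topology from $C \times C$. The set $(A\times B)\cap C^{(2)}$ is open in $C^{(2)}$, so Proposition~\ref{prop:open-multiplication} gives that $AB$ is open.

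For (2), take $x \in U$ with $U$ open. By Lemma~\ref{lem:base-base} there is an open $\mathbf{d}$-injective $A$ with $x \in A \subseteq U$. By its dual there is an open $\mathbf{r}$-injective $B$ with $x \in B \subseteq U$. Then $A \cap B$ is an open local bisection with $x \in A \cap B \subseteq U$, since subsets of $\mathbf{d}$-injective (respectively $\mathbf{r}$-injective) sets inherit the property.

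For (3), I would show $C_{o}$ is open as follows. Given an identity $e$, pick an open $\mathbf{d}$-injective $W = W_{e}$ containing $e$. The map $\mathbf{d}$ is continuous and $\mathbf{d}$ restricted to $W$ is a homeomorphism onto the open set $\mathbf{d}(W)$. Consider $V = \{w \in W : \mathbf{d}(w) \in W\}$, which equals $W \cap \mathbf{d}^{-1}(W)$ and is open and contains $e$. For $w \in V$, both $w$ and $\mathbf{d}(w)$ lie in $W$ and have the same domain, because $\mathbf{d}(\mathbf{d}(w)) = \mathbf{d}(w)$. Injectivity then gives $w = \mathbf{d}(w)$, an identity. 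So $e \in V \subseteq C_{o}$, and $C_{o}$ is open.

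The only point needing care is (1): the openness step relies on Proposition~\ref{prop:open-multiplication} being stated for the multiplication as a map on the space of composable pairs. I take it in that form, so the argument is immediate.
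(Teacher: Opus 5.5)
Your proposal is correct and follows the paper's route. The paper proves (1) and (2) just by invoking Lemma~\ref{lem:local-sections}, Lemma~\ref{lem:base-base} (with their duals) and Proposition~\ref{prop:open-multiplication}, as you do, and calls (3) straightforward. Your $W \cap \mathbf{d}^{-1}(W)$ argument fills (3) in correctly. It uses only continuity of $\mathbf{d}$ and an open $\mathbf{d}$-injective neighbourhood, so it works whether $\mathbf{d}$ is read as a local homeomorphism into $C$ (in which case $C_{o} = \mathbf{d}(C)$ is open at once) or onto $C_{o}$.

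One slip in the $\mathbf{r}$-injectivity step of (1) needs fixing. In the paper's left-to-right convention $\mathbf{r}(ab) = \mathbf{r}(b)$, so the hypothesis $\mathbf{r}(ab) = \mathbf{r}(cd)$ gives $\mathbf{r}(b) = \mathbf{r}(d)$ directly. You cannot start from $\mathbf{r}(a) = \mathbf{r}(c)$; that equality is only available once you have deduced $b = d$. The correct order is: first $b = d$, then $\mathbf{r}(a) = \mathbf{d}(b) = \mathbf{d}(d) = \mathbf{r}(c)$, hence $a = c$. Alternatively, apply Lemma~\ref{lem:local-sections} in the opposite category.
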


From now on, we shall deal only with an \'etale topological cancellative category $C$.
Suppose that $C$ is left reversible with groupoid of fractions $G$.
We shall prove that $G$ is an \'etale groupoid in a natural way.
Let $\beta$ be the set of all open local bisections of $C$.

\begin{lemma}\label{lem:kingdoms} 
Let $C$ be an \'etale topological cancellative left reversible category with groupoid of fractions $G$,
where $\beta$ is the set of all open local bisections of $C$.
Then $\beta \beta^{-1}$ is a base for a topology on $G$.
\end{lemma}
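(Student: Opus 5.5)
The plan is to verify the two standard conditions for a family of subsets to be a base for a topology. First, $\beta\beta^{-1}$ covers $G$. Second, whenever $g \in AB^{-1} \cap CD^{-1}$ with $A,B,C,D \in \beta$, there are $E,F \in \beta$ with $g \in EF^{-1} \subseteq AB^{-1} \cap CD^{-1}$. Throughout I identify $C$ with $\iota(C) \subseteq G$, using Theorem~\ref{them:four}, and I read $AB^{-1}$ as the set of defined products $ab^{-1}$ with $a \in A$, $b \in B$ and $\mathbf{r}(a) = \mathbf{r}(b)$.

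For the covering condition, let $g \in G$. By Theorem~\ref{them:four} we can write $g = xy^{-1}$ with $x,y \in C$ and $\mathbf{r}(x) = \mathbf{r}(y)$. By Corollary~\ref{cor:mary}(2) there are $A,B \in \beta$ with $x \in A$ and $y \in B$, and then $g \in AB^{-1}$.

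For the intersection condition, suppose $g = ab^{-1} = cd^{-1}$ with $a \in A$, $b \in B$, $c \in C$, $d \in D$.
\begin{enumerate}
\item Since $\mathbf{d}(a) = \mathbf{d}(g) = \mathbf{d}(c)$, left reversibility gives $u,v \in C$ with $au = cv$.
\item In $G$ we have $b^{-1}d = a^{-1}c = uv^{-1}$, so $bu = dv$. This is the same manipulation as in the uniqueness part of the proof of Theorem~\ref{them:four}; cancellativity of $C$ and faithfulness of $\iota$ make it legitimate.
\item By Corollary~\ref{cor:mary}(2), choose $U,V \in \beta$ with $u \in U$ and $v \in V$.
\item Put $E = AU \cap CV$ and $F = BU \cap DV$. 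By Corollary~\ref{cor:mary}(1) these are open local bisections, since an intersection of open local bisections is again one. Moreover $au = cv \in E$ and $bu = dv \in F$, so $g = (au)(bu)^{-1} \in EF^{-1}$.
\end{enumerate}

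It remains to show $EF^{-1} \subseteq AB^{-1} \cap CD^{-1}$. Let $p \in E$ and $q \in F$ with $\mathbf{r}(p) = \mathbf{r}(q)$. Write $p = a'u'$ and $q = b'u''$ with $a' \in A$, $b' \in B$ and $u',u'' \in U$. Then $\mathbf{r}(u') = \mathbf{r}(p) = \mathbf{r}(q) = \mathbf{r}(u'')$. Since $U$ is $\mathbf{r}$-injective, $u' = u''$, and so
$$pq^{-1} = a'u'u'^{-1}b'^{-1} = a'b'^{-1} \in AB^{-1}.$$
The same argument with the factorizations through $V$ and its $\mathbf{r}$-injectivity gives $pq^{-1} \in CD^{-1}$.

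The step I expect to need the most care is step 2, deriving $bu = dv$ from $au = cv$ and $ab^{-1} = cd^{-1}$. It has to be done inside $G$ using that $\iota$ is an embedding, with attention to domains and ranges. The remainder is bookkeeping with Corollary~\ref{cor:mary}.
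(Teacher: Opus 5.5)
Your proposal is correct and follows the paper's proof essentially step for step: cover $G$ via Theorem~\ref{them:four} and Corollary~\ref{cor:mary}(2), use left reversibility to get $au = cv$, set $E = AU \cap CV$ and $F = BU \cap DV$, and use $\mathbf{r}$-injectivity of $U$ and $V$ to get $EF^{-1} \subseteq AB^{-1} \cap CD^{-1}$. Your step 2 spells out the groupoid computation that the paper states in one line (that the same pair also satisfies $bu = dv$), and your check of it is valid; you should rename the bisection you call $C$, since that letter is already the category.
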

\begin{proof} We shall use the fact that a base for the topology in $C$ consists of open local bisections;
this is part (2) of Corollary~\ref{cor:mary}.
An element $g$ of $G$ can be written $ab^{-1}$ where $a,b \in C$.
By assumption, $a \in U \in \beta$ and $b \in V \in \beta$ for some $U$ and $V$.
It follows that $g \in UV^{-1}$.
Thus, every element of $G$ is in some element of $\beta \beta^{-1}$.

Suppose that $g \in U_{1}V_{1}^{-1} \cap U_{2}V_{2}^{-1}$, where $U_{1}, U_{2}, V_{1}, V_{2} \in \beta$.
Then we may write $g = a_{1}b_{1}^{-1} = a_{2}b_{2}^{-1}$ where $a_{1} \in U_{1}$, $a_{2} \in U_{2}$, $b_{1} \in V_{1}$
and $b_{2} \in V_{2}$.
Observe that $\mathbf{d}(a_{1}) = \mathbf{d}(a_{2})$ and $\mathbf{d}(b_{1}) = \mathbf{d}(b_{2})$.
Since $C$ is left reversible, we may find elements $a,a' \in C$ such that $a_{1}a = a_{2}a' = r$ (say)
and $b_{1}a = b_{2}a' = s$ (say).
(That the elements $a,a'$ are the same in each case follows from the two different ways that $g$ can be written).
Choose $A,B \in \beta$ such that $A$ contains $a$ and $B$ contains $a'$.
We now apply part (1) of Corollary~\ref{cor:mary}:
$U_{1}A \cap U_{2}B = X$ is an open local bisection that contains $r$
and
$V_{1}A \cap V_{2}B = Y$ is an open local bisection that contains $s$.
Thus $X,Y \in \beta$.
Observe that $rs^{-1} \in XY^{-1}$,
which simplifies to $g$.
We have shown that $g \in XY^{-1}$.
Now, let $h \in XY^{-1}$ be any element.
Then we can write $h = (\mathbf{u}\mathbf{a})(\mathbf{v}\mathbf{a}_{1})^{-1}$
where $\mathbf{u} \in U_{1}$, $\mathbf{a} \in A$, $\mathbf{v} \in V_{1}$ and $\mathbf{a}_{1} \in A$.
It follows that $h = \mathbf{u}\mathbf{a}\mathbf{a}_{1}^{-1}\mathbf{v}^{-1}$.
We must have that $\mathbf{r}(\mathbf{a}) = \mathbf{r}(\mathbf{a}_{1})$.
But $A$ is a local bisection containing both $\mathbf{a}$ and $\mathbf{a}_{1}$.
So, $\mathbf{a} = \mathbf{a}_{1}$.
We deduce that $h \in U_{1}V_{1}^{-1}$.
We may similarly show that $h \in U_{2}V_{2}^{-1}$.
We have therefore proved that $g \in XY^{-1} \subseteq U_{1}V_{1}^{-1} \cap U_{2}V_{2}^{-1}$.
\end{proof}

We now come to our main result.

\begin{theorem}\label{them:main} Let $C$ be an \'etale topological cancellative left reversible category whose groupoid of fractions $G$. 
Let $\beta$ be a base for the topology on $C$ consisting of all open local bisections 
Then $G$ is an \'etale groupoid with respect to the topology with base $\beta \beta^{-1}$.
\end{theorem}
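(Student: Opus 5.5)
By Lemma~\ref{lem:kingdoms}, $\beta\beta^{-1}$ is a base for a topology on $G$, so the task is to show three things: $G$ is a topological groupoid, $\mathbf{d}$ is a local homeomorphism, and $\mathbf{r}$ is a local homeomorphism. The plan is to work entirely with basic open sets $UV^{-1}$, $U,V\in\beta$. Throughout I will use two observations from the proof of Lemma~\ref{lem:kingdoms}. First, each such set is a local bisection of $G$. Second, in any element $uv^{-1}\in UV^{-1}$ the factors $u\in U$ and $v\in V$ are uniquely determined: if $uv^{-1}=u'v'^{-1}$ then $\mathbf{d}(u)=\mathbf{d}(u')$ and $\mathbf{d}(v)=\mathbf{d}(v')$, so $u=u'$ and $v=v'$.

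\textbf{Inversion and identities.} Inversion is continuous, and in fact a homeomorphism, because it maps the basic set $UV^{-1}$ onto the basic set $VU^{-1}$.

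Next I identify the identities. They are the elements $\iota(e)$ with $e\in C_o$. Since $C_o$ is open by Corollary~\ref{cor:mary}(3), every basic set $UU^{-1}$ with $U\subseteq C_o$ meets $G_o$ in $\iota(U)$. I will check that $\iota\colon C\to G$ is continuous and open onto its image, so that the subspace topology on $G_o$ agrees with the topology of $C_o$. The intersection $\iota(C)\cap UV^{-1}$ should be $\iota(U\cap VC)$ up to the usual identifications. To handle this I would use openness of multiplication (Proposition~\ref{prop:open-multiplication}).

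\textbf{$\mathbf{d}$ and $\mathbf{r}$ are local homeomorphisms.} In $G$, $\mathbf{d}(uv^{-1})=\mathbf{d}(u)$, so $\mathbf{d}$ restricted to $UV^{-1}$ is the map $uv^{-1}\mapsto \mathbf{d}(u)$. By the second observation this factors as the bijection $UV^{-1}\to U$, $uv^{-1}\mapsto u$, followed by $\mathbf{d}|_U$, which is a homeomorphism onto the open set $\mathbf{d}(U)$ because $U$ is an open $\mathbf{d}$-injective set in a $\mathbf{d}$-étale category.

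So it suffices to show that $UV^{-1}\to U$ is a homeomorphism. It sends a basic subset $U'V'^{-1}\subseteq UV^{-1}$ to $U'$, and conversely, for $U'\subseteq U$ open, its preimage is $U'\cdot\{v: \exists u\in U', uv^{-1}\in UV^{-1}\}^{-1}$. The second factor is open in $V$ because $\mathbf{r}|_V$ and $\mathbf{r}|_U$ are homeomorphisms onto open sets and $\mathbf{r}(u)=\mathbf{r}(v)$. The map $\mathbf{r}$ is handled symmetrically via $\mathbf{r}(uv^{-1})=\mathbf{d}(v)$. This step also uses the fact that a basic set of the form $U'V'^{-1}$ can be refined inside $UV^{-1}$, which is exactly the argument of Lemma~\ref{lem:kingdoms}.

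\textbf{Continuity of multiplication.} I expect this to be the main obstacle. Take $g=xy^{-1}$ and $h=uw^{-1}$ with $\mathbf{d}(y)=\mathbf{d}(u)$, and choose $a,b$ with $ya=ub$, so that $gh=(xa)(wb)^{-1}$. Let $XY^{-1}$ be a basic neighbourhood of $gh$ containing $xa$ and $wb$ in open bisections $X,Y$. Using continuity of $\mathbf{m}$ in $C$, I choose $U\ni x$, $V\ni y$, $U_2\ni u$, $V_2\ni w$, $A\ni a$ and $B\ni b$ in $\beta$ with $UA\subseteq X$, $V_2B\subseteq Y$, and $VA\cap U_2B$ an open set containing $ya$.

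The difficulty is that for nearby $g'=x'y'^{-1}$ and $h'=u'w'^{-1}$ the witnesses $a',b'$ with $y'a'=u'b'$ must be chosen inside $A$ and $B$. Here I use étaleness. Since $\mathbf{d}(y')=\mathbf{d}(u')$ lies near $\mathbf{d}(y)$, I shrink $V$ and $U_2$ so that the unique $s\in VA\cap U_2B$ with $\mathbf{d}(s)=\mathbf{d}(y')$ exists; this is possible because $\mathbf{d}$ restricted to $VA\cap U_2B$ is a homeomorphism onto an open set. Cancellativity then gives the factorisations $s=y'a'=u'b'$ with $a'\in A$ and $b'\in B$, since $VA$ is a bisection and the factors are unique. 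Consequently $g'h'=(x'a')(w'b')^{-1}\in XY^{-1}$. This shows that the product of $UV^{-1}$ and $U_2V_2^{-1}$, after the shrinkings, lies in $XY^{-1}$, which proves continuity of multiplication.
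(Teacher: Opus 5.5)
Your outline is sound: the base, inversion, and the two bisection observations match the paper. Where you genuinely differ is continuity of multiplication.

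\textbf{Multiplication.} The paper uses one shared bisection. Given $g_1=a_1b_1^{-1}$, $h_1=a_2b_2^{-1}$ and a basic neighbourhood $UV^{-1}$ of $g_1h_1$, it chooses $p,q$ with $b_1p=a_2q$, then $a,b$ with $a_1pa=ub$ and $b_2qa=vb$. It takes open bisections $Z\ni b_1pa$ and $B\ni b$, and notes that $g_1\in (UB)Z^{-1}$, $h_1\in Z(VB)^{-1}$ and $(UB)Z^{-1}Z(VB)^{-1}\subseteq UBB^{-1}V^{-1}\subseteq UV^{-1}$. Because the ``pivot'' $Z$ is shared, composability does all the work and only Corollary~\ref{cor:mary}(1) is needed.

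You instead keep neighbourhoods adapted to the given fractions and use $\mathbf{d}$-\'etaleness of $S=VA\cap U_2B$ to produce witnesses $a'\in A$, $b'\in B$ for each nearby composable pair. This works, but it is longer. The precise reason is as follows: $s\in S\subseteq VA$ with $\mathbf{d}(s)=\mathbf{d}(y')$ factors as $y''a'$ with $y''\in V$ and $\mathbf{d}(y'')=\mathbf{d}(y')$, so $y''=y'$ by $\mathbf{d}$-injectivity of $V$. Take $S$ for the unshrunk sets, then replace $V$ by $V\cap\mathbf{d}^{-1}(\mathbf{d}(S))$.

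One ordering point must be repaired. You fix $a,b$ first and then take a neighbourhood $XY^{-1}$ with $xa\in X$, $wb\in Y$, but continuity requires handling an arbitrary basic $X'Y'^{-1}\ni gh$. Do what the paper does:
\begin{itemize}
\item from $(xa)(wb)^{-1}=x'y'^{-1}$, obtain $c,c'$ with $xac=x'c'$ and $wbc=y'c'$;
\item replace $a,b$ by $ac,bc$;
\item use $X=X'C'$, $Y=Y'C'$ for an open bisection $C'\ni c'$, so that $XY^{-1}\subseteq X'Y'^{-1}$.
\end{itemize}

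\textbf{\'Etaleness.} Here you actually go further than the paper, which concludes \'etaleness directly from the fact that $\beta\beta^{-1}$ consists of local bisections. That alone does not make $\mathbf{d}$ a local homeomorphism in a general topological groupoid; one also needs $\mathbf{d}$ to be open. So your attention to $G_o$ is well placed.

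However, several claims in this part are wrong as stated:
\begin{itemize}
\item $\iota(C)\cap UV^{-1}$ equals $\{c\in C: cv\in U\text{ for some }v\in V\}$, not $U\cap VC$. Taking $U=V$ shows the difference: the former is $\mathbf{d}(U)$, the latter is $U$.
\item The map $uv^{-1}\mapsto u$ is onto $U\cap\mathbf{r}^{-1}(\mathbf{r}(V))$, not onto $U$.
\item It need not send a basic $U'V'^{-1}\subseteq UV^{-1}$ onto $U'$. The refinement in Lemma~\ref{lem:kingdoms} produces $X=U_1A\cap U_2B$, which is not inside $U_1$.
\end{itemize}

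Two observations replace all of this cleanly. First, if $uv^{-1}$ is an identity then $u=v$, so $UV^{-1}\cap G_o=\mathbf{d}(U\cap V)$; hence $G_o$ carries the topology of $C_o$. Second, $\mathbf{d}(UV^{-1})=\mathbf{d}(U\cap\mathbf{r}^{-1}(\mathbf{r}(V)))$, which is open in $C_o$ because $C$ is \'etale. Therefore $\mathbf{d}$ is continuous, open, and injective on each basic set, so it is a local homeomorphism. The map $\mathbf{r}$ is handled the same way via $\mathbf{r}(UV^{-1})=\mathbf{d}(V\cap\mathbf{r}^{-1}(\mathbf{r}(U)))$.
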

\begin{proof} We proved in Lemma~\ref{lem:kingdoms}, that $\beta \beta^{-1}$ is a base for a topology on $G$. 
It is clear that inversion is a homeomorphism.
We show now that the partial multiplication in $G$ is continuous.
Let $g_{1}h_{1} \in UV^{-1}$ be any element.
Let $g_{1} = a_{1}b_{1}^{-1}$ and $h_{1} = a_{2}b_{2}^{-1}$ where $a_{1},b_{1},a_{2},b_{2} \in C$.
Let $p$ and $q$ be such that $b_{1}p = a_{2}q = r$.
Then $g_{1}h_{1} = (ap_{1})(b_{2}q)^{-1}$.
By assumption $g_{1}h_{1} = uv^{-1}$ where $u \in U$ and $v \in V$.
Thus $(a_{1}p)(b_{2}q)^{-1} = uv^{-1}$.
It follows that there are elements $a,b \in C$ such that
$a_{1}pa = ub$ and $b_{2}qa = vb$.
Let $Z$ and $B$ be open local bisections of $C$ such that 
$b_{1}pa \in Z$ and $b \in B$.
Consider, first, the set $UBZ^{-1}$.
We have that $a_{1}pa = ub$ and so $a_{1}pa \in UB$.
By definition, $b_{1}pa \in Z$.
It follows that $a_{1}b_{1}^{-1} \in UBZ^{-1}$
and so $g_{1} \in UBZ^{-1}$.
Now, we consider, $Z(VB)^{-1}$.
By definition, $b_{1}pa \in Z$.
But $b_{1}p = a_{2}q$.
Thus $a_{2}qa \in Z$.
On the other hand, $b_{2}qa \in VB$.
Thus $a_{2}b_{2}^{-1} \in Z(VB)^{-1}$ and so $h_{1} \in Z(VB)^{-1}$.
Now, we calculate the product
$$(UBZ^{-1})(Z(VB)^{-1}) \subseteq UBB^{-1}V^{-1} \subseteq UV^{-1}$$
where we have used the fact that $Z$ and $B$ are local bisections.
This is enough to show that the partial multiplication in the groupoid $G$ is continuous. 
We are therefore dealin with a topological groupoid.
We prove that each element of $\beta \beta^{-1}$ is a local bisection.
Let $g,h \in UV^{-1}$ such that $\mathbf{d}(g) = \mathbf{d}(h)$.
We may write $g = u_{1}v_{1}^{-1}$ and $h = u_{2}v_{2}^{-1}$.
Thus $\mathbf{d}(u_{1}) = \mathbf{d}(u_{2})$.
It follows that $u_{1} = u_{2}$.
From this we deduce that $\mathbf{d}(v_{1}) = \mathbf{d}(v_{2})$ and so $v_{1} = v_{2}$.
It follows that $g = h$.
Symmetry delivers the result.
The topology on $G$ has as a base local bisections.
From this, it follows that $G$ will be \'etale.
\end{proof}

\begin{remark}{\em By part (3) of Corollary~\ref{cor:mary}, the set of identities $C_{o}$ is an open set
in $C$. It is therefore an open local bisection.
Thus $\beta \subseteq \beta \beta^{-1}$.
This means that $C$ is an open subset of $G$, its groupoid of fractions.}
\end{remark}

We apply Theorem~\ref{them:main} to our directed strongly partial actions but add in a topology;
we do this, in essence, following \cite{RW}.
We shall assume that $X$ is a topological space (for our purposes we do not need more),
that $\mbox{dom}(m)$ and $\mbox{im}(m)$ are open subsets of $X$ for each $m \in M$,
and that $f_{m}$ (recall that $(x)f_{m} = x \cdot m$) is a local homeomorphism from $\mbox{dom}(m)$ to $\mbox{im}(m)$.
If these conditions are satisfied, then we shall refer to a {\em topological strongly partial action}.
We shall assume that we have such a partial action from now on.

If $U$ is an open subset of $X$ and $m \in M$ define
$$(U)m^{-1} = (\mbox{im}(m) \cap U)f_{m}^{-1};$$
this is an open set because $f_{m}$ is a local homeomorphism from $\mbox{dom}(m)$ to $\mbox{im}(m)$.

Let $U$ be any open subset of $X$ and $m \in M$;
define $O(U,m) = \{(x,m,x \cdot m) \colon x \in U\}$.
If $m \neq n$ then $O(U,m) \cap O(V,n) = \varnothing$
whereas
$O(U,m) \cap O(V,m) = O(U \cap V,m)$.
Let $\alpha$ be the set of all $O(U,m)$.
This is the base for a topology on $C(X,M)$.

\begin{proposition}\label{prop:eleven} If $(X,M)$ is a topological directed strongly partial action 
then $C(X,M)$ is an \'etale cancellative right reversible category with respect to the topology with base $\alpha$.
\end{proposition}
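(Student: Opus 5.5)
The plan is to verify the four properties in turn: cancellative, left reversible, topological, and étale. The term `right reversible' in the statement should be read as `left reversible', in line with the slip in the proof of Lemma~\ref{lem:ten}. The algebraic parts need no new work. Since $M$ is assumed cancellative throughout, Lemma~\ref{lem:two} gives that $C(X,M)$ is cancellative. Since the action is directed and strongly partial, Proposition~\ref{prop:three} gives that $C(X,M)$ is left reversible.

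For the topology, I always read $O(U,m)$ as $O(U\cap \mbox{dom}(m),m)$, so we may assume $U \subseteq \mbox{dom}(m)$. The intersection formulas stated before the proposition show that $\alpha$ is a base. The key observation is that for a fixed $m$, the map $U \mapsto O(U,m)$ identifies the subspace $O(\mbox{dom}(m),m)$ with the open set $\mbox{dom}(m)\subseteq X$. In particular $C_{o}=O(X,1)$ is homeomorphic to $X$ via $(x,1,x)\mapsto x$.

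\textbf{Continuity and étaleness of $\mathbf{d}$.} Under these identifications, $\mathbf{d}$ restricted to $O(\mbox{dom}(m),m)$ is the inclusion $\mbox{dom}(m)\hookrightarrow X$. Hence $\mathbf{d}^{-1}(O(V,1))=\bigcup_m O(V\cap\mbox{dom}(m),m)$, so $\mathbf{d}$ is continuous. Moreover, $\mathbf{d}$ maps each $O(U,m)$ homeomorphically onto the open set $O(U,1)$, so $\mathbf{d}$ is a local homeomorphism.

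\textbf{Continuity and étaleness of $\mathbf{r}$.} Under the same identifications, $\mathbf{r}$ restricted to $O(\mbox{dom}(m),m)$ is $f_{m}$. Therefore $\mathbf{r}^{-1}(O(V,1))=\bigcup_m O((V)m^{-1},m)$, which is open because $(V)m^{-1}$ is open. To see that $\mathbf{r}$ is a local homeomorphism, take $(x,m,x\cdot m)$. Choose an open $U_x\ni x$ in $\mbox{dom}(m)$ on which $f_m$ is a homeomorphism onto an open subset of $\mbox{im}(m)$; this set is open in $X$ because $\mbox{im}(m)$ is open. Then $\mathbf{r}$ maps $O(U_x,m)$ homeomorphically onto $O(f_m(U_x),1)$.

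\textbf{Continuity of multiplication.} Take a composable pair $((x,a,x\cdot a),(x\cdot a,b,x\cdot ab))$ whose product lies in a basic open set, which must have the form $O(W,ab)$ with $x\in W$. The set $O(W\cap\mbox{dom}(a),a)\times O(\mbox{dom}(b),b)$ is open in $C\times C$, and I intersect it with the composable pairs. Any composable pair $((y,a,y\cdot a),(y\cdot a,b,y\cdot ab))$ in it has product $(y,ab,y\cdot ab)$ with $y\in W$. This product exists because strong partiality gives $\exists\, y\cdot(ab)$, so the product lies in $O(W,ab)$.

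Thus $C(X,M)$ is a topological category in which $\mathbf{d}$ and $\mathbf{r}$ are local homeomorphisms, so it is étale.

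The main obstacle is bookkeeping rather than depth. The essential point is the identification of each slice $O(\mbox{dom}(m),m)$ with $\mbox{dom}(m)$, under which $\mathbf{d}$ becomes an inclusion and $\mathbf{r}$ becomes $f_m$. The hypotheses that $\mbox{im}(m)$ is open and that $f_m$ is a local homeomorphism are used exactly to make $\mathbf{r}$ a local homeomorphism.
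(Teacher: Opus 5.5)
Your proof is correct and follows essentially the same route as the paper. Cancellativity and (left) reversibility come from Lemma~\ref{lem:two} and Proposition~\ref{prop:three}; continuity of $\mathbf{d}$ and $\mathbf{r}$ comes from the preimages of the sets $O(V,1)$; continuity of multiplication comes from $O(U,a)O(V,b)\subseteq O(U,ab)$; and the local-homeomorphism property comes from restricting to sets $O(W,m)$ on which $f_m$ is a homeomorphism. Your explicit identification of each slice $O(\mbox{dom}(m),m)$ with $\mbox{dom}(m)$ only spells out what the paper leaves implicit, and it correctly shows that the hypotheses on $f_m$ and $\mbox{im}(m)$ are needed only for $\mathbf{r}$.
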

\begin{proof} We have only to show that $C(X,M)$ is an \'etale topological category.
Observe that 
$$\mathbf{d}^{-1}(O(U,1)) = \bigcup_{m \in M}O(U,m)$$
and 
$$\mathbf{r}^{-1}(O(U,1)) = \bigcup_{m \in M}O((U)m^{-1},m).$$
It follows that both $\mathbf{d}$ and $\mathbf{r}$ are continuous.
We have that $O(U,m)O((U)m^{-1},n) \subseteq O(mn,U)$.
Thus the partial multiplication in the category is continuous.
This proves that $C(X,M)$ is a topological category.
Suppose that $(x,m,x \cdot m)$ is an arbitrary  element of $C(X,M)$.
Thus, $x \in \mbox{dom}(m)$.
Let $x \in W_{x}$ be an open set on such that the restriction of $f_{m}$ to this set is a homeomorphism.
Then $\mathbf{d}$ restricted to $O(W_{x},m)$ is a homeomorphism
and $(x,m,x \cdot x) \in O(W_{x},m)$
This shows that $\mathbf{d}$ is a local homeomorphism.
We may similarly show that $\mathbf{r}$ is a local homeomorphism.
\end{proof}

The topology on $C(X,M)$ has as a base sets of the form $O(U,m)$.
In its groupoid of fractions, the set
$O(U,m)O(V,n)^{-1}$
is non-empty precisely when it has the form $Z(U,m,n,V)$
as defined prior to \cite[Lemma 5.9]{RW}.
It follows that the topology we have defined on the groupoid of fractions $G(X,M)$
is the same as the topology used in \cite[Proposition 5.12]{RW}.
Therefore by combining Proposition~\ref{prop:eleven} with Theorem~\ref{them:main},
we obtain \cite[Proposition 5.12(a)]{RW} together with the extra information that the groupoid 
in question is the groupoid of fractions of $C(X,M)$ by Theorem~\ref{them:main}. 


We now examine the converse of our main results.

\begin{theorem}\label{them:twelve} Let $G$ be an \'etale topological groupoid.
Suppose that $C$ is an open, wide subcategory of $G$ which is left reversible 
and such that $CC^{-1} = G$.
Then $C$ is an \'etale topological cancellative category,
and if $\alpha$ is a base for the topology on  $C$ consisting of open local bisections
then $\alpha \alpha^{-1}$ is a base for the topology on $G$.
\end{theorem}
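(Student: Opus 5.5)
Since $C$ is a subcategory of a groupoid, cancellativity is immediate: if $ab = ac$ in $C$ then multiplying on the left by $a^{-1}$ in $G$ gives $b = c$, and similarly on the right. Give $C$ the subspace topology from $G$. Because $C$ is open and wide, $C_o = G_o$, so $\mathbf{d}$ and $\mathbf{r}$ on $C$ are just the restrictions of those of $G$ to the open set $C$. The multiplication of $C$ is the restriction of $\mathbf{m}$ to $(C\times C)\cap G^{(2)}$, which is again continuous. Hence $C$ is a topological category.

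For the étale property, let $a\in C$. Since $G$ is étale, there is an open local bisection $W$ of $G$ containing $a$ on which $\mathbf{d}$ and $\mathbf{r}$ are homeomorphisms onto open subsets of $G_o$. Then $W\cap C$ is open in $G$ (as $C$ is open) and contains $a$. The restrictions of $\mathbf{d}$ and $\mathbf{r}$ to $W\cap C$ are homeomorphisms onto open sets, because open subsets of a local bisection in an étale groupoid have open images. Thus $C$ is $\mathbf{d}$-étale and $\mathbf{r}$-étale. Moreover, the open local bisections of $C$ are exactly the open local bisections of $G$ contained in $C$. In particular, by Corollary~\ref{cor:mary}, any base $\alpha$ of open local bisections of $C$ is a family of open local bisections of $G$.

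Next we show $\alpha\alpha^{-1}$ is a base for the topology on $G$. Each $UV^{-1}$ with $U,V\in\alpha$ is open in $G$, since inversion is a homeomorphism and the product of open sets in an étale groupoid is open (Proposition~\ref{prop:open-multiplication}, or directly for groupoids). It remains to show that for $g\in O$, with $O$ open in $G$, there are $U,V\in\alpha$ with $g\in UV^{-1}\subseteq O$. Write $g=ab^{-1}$ with $a,b\in C$, using $CC^{-1}=G$. The map $(x,y)\mapsto xy^{-1}$ is continuous on the pairs with $\mathbf{r}(x)=\mathbf{r}(y)$, and this pair set is a subspace of $C\times C$. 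So there are open neighbourhoods $U'\ni a$ and $V'\ni b$ in $C$ with $xy^{-1}\in O$ whenever $x\in U'$, $y\in V'$ and $\mathbf{r}(x)=\mathbf{r}(y)$. Since $\alpha$ is a base for $C$, choose $U,V\in\alpha$ with $a\in U\subseteq U'$ and $b\in V\subseteq V'$. Then $g\in UV^{-1}\subseteq O$.

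This also shows that the groupoid of fractions of $C$, which is $G$ by the uniqueness in Theorem~\ref{them:four}, carries exactly the topology of Theorem~\ref{them:main}. The main point requiring care is the third step: checking that sets $UV^{-1}$ are genuinely open in $G$ and that the neighbourhood argument uses continuity of $\mathbf{m}$ and inversion on the fibred product correctly. Everything else is restriction of the étale structure of $G$ to the open set $C$. Left reversibility is only needed to ensure that $G$ is the groupoid of fractions, via Theorem~\ref{them:four}.
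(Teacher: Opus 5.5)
Your proof is correct, but the key step (that $\alpha\alpha^{-1}$ is a base) takes a different route from the paper's. The paper starts from an open local bisection $X$ of $G$ containing $g=ab^{-1}$, using that such sets form a base of the \'etale groupoid $G$. It takes any open local bisection $B$ of $C$ containing $b$ and puts $A=XB\cap C$, which is an open local bisection of $C$ containing $a$. The bisection property of $B$ then forces $AB^{-1}\subseteq X$: an element $(xb_1)b_2^{-1}$ with $\mathbf{r}(b_1)=\mathbf{r}(b_2)$ has $b_1=b_2$. So the paper's covering argument is pure bisection calculus, and it also shows that the factor $B$ may be chosen arbitrarily. You instead use continuity of the division map $(x,y)\mapsto xy^{-1}$ on the fibred product to find neighbourhoods $U'\ni a$, $V'\ni b$ in $C$ with $U'(V')^{-1}\subseteq O$, and then shrink into $\alpha$. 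In your argument the bisection property of members of $\alpha$ plays no role in the covering; \'etaleness is used only to know that each $UV^{-1}$ is open. This buys you two things. First, your argument works verbatim for an arbitrary base $\alpha$ of open local bisections, whereas the paper's $A$ and $B$ are merely open local bisections of $C$, so a further shrinking step into $\alpha$ is tacitly needed. Second, you explicitly check that the sets $UV^{-1}$ are open in $G$, and you spell out the inherited \'etale structure of $C$, both of which the paper leaves implicit. Two minor remarks. Left reversibility is in fact automatic from $CC^{-1}=G$: if $\mathbf{d}(a)=\mathbf{d}(b)$, write $a^{-1}b=xy^{-1}$ with $x,y\in C$ to get $ax=by$. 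So your closing comment on its role is slightly overstated. Also, citing Corollary~\ref{cor:mary} for the claim that members of $\alpha$ are open local bisections of $G$ is unnecessary, since that follows from the sentence just before it.
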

\begin{proof} It is immediate that $C$ is cancellative and, because $C$ is open in $G$,
it inherits the main features of the topology of $G$.
Let $X$ be an open local bisection of $G$ containing the element $g$.
Let $g = ab^{-1}$ where $a,b \in C$.
Let $B$ be any open local bisection of $C$ that contains $b$.
Then $a \in XB$.
Thus $XB \cap C$ is non-empty.
Put $A = XB \cap C$, an open local bisection of $C$.
Observe that $a \in A$.
Thus $g \in AB^{-1}$.
We prove that $AB^{-1} \subseteq X$.
Let $h \in AB^{-1}$.
Then $h = (xb_{1})b_{2}^{-1}$, where $b_{1},b_{2} \in B$ and $x \in X$.
But $B$ is a local bisection and so $b_{1} = b_{2}$.
It follows that $h \in X$.
We have proved that each open local bisection of $G$ is a union of open local
bisections of the form $UV^{-1}$ where $U$ and $V$ are open local bisections of $C$.
\end{proof}

\begin{example}{\em  We return to the set-up of Example~\ref{ex:mark}.
A base for a topology on $C(G^{\ast}, \mathbb{N})$ has the form $O(U,m)$ where $U$ is an open subset of $G^{\omega}$.
Thus the groupoid of fractions has as a base those subsets of the form $O(U,m)O(V,n)^{-1}$.
This set consists of elements of the form $(x,m-n,y)$ where $x \cdot m = y \cdot n$ and $x \in U$ and $y \in V$.
This is precisely the set of elements of the form $(uw,|u|-|v|, vw)$ where $uw \in U$ and $vw \in V$.
If $X \subseteq G^{\omega}$ and $u$ is any finite path define $u^{-1}U = \{x \in G^{\omega} \colon ux \in U \}$.
Thus $uw \in U$ and $vw \in V$ iff $w \in u^{-1}U \cap v^{-1}V = Y$, say.
Thus we are dealing with precisely those element $(uw,|u|-|v|, vw)$ where $w \in Y$.
If $X$ is open then $u^{-1}X$ is open.
Thus $Y$ is open.
On the other hand, the set of elements of the form $(uw,|u|-|v|, vw)$ where $w \in Y'$,
where $Y'$ is open can also be written in the form
$(uw,|u|-|v|, vw)$ where $uw \in uY'$ and $vw \in vY'$.
However, if $X$ is open then $uX$ is open.
We have therefore showed that `our' topology is exactly the same as the topology in \cite[pp~153--157]{P}.}
\end{example}


\end{document}